\title[]{Nonabelian Hodge theory in positive characterstic via exponential twisting}
\author[Guitang Lan]{Guitang Lan}
\email{lan@uni-mainz.de}
\address{Institut f\"{u}r  Mathematik, Universit\"{a}t
Mainz, Mainz, 55099, Germany}
\author[Mao Sheng]{Mao Sheng}
\email{msheng@ustc.edu.cn}
\address{School of Mathematical Sciences,
University of Science and Technology of China, Hefei, 230026, China}
\author[Kang Zuo]{Kang Zuo}
\email{zuok@uni-mainz.de}
\address{Institut f\"{u}r Mathematik, Universit\"{a}t
Mainz, Mainz, 55099, Germany}
\begin{document}
%%%%%%%%%%%%%%%%%%%% Text italic %%%%%%%%%%%%%%%%%%%%%%%%%%%%
\theoremstyle{plain}
\newtheorem{thm}{Theorem}[section]
\newtheorem{theorem}[thm]{Theorem}
\newtheorem{lemma}[thm]{Lemma}
\newtheorem{corollary}[thm]{Corollary}
\newtheorem{proposition}[thm]{Proposition}
\newtheorem{addendum}[thm]{Addendum}
\newtheorem{variant}[thm]{Variant}
%%%%%%%%%%%%%%%%%%%% Text roman %%%%%%%%%%%%%%%%%%%%%%%%%%%%%
\theoremstyle{definition}
\newtheorem{lemma and definition}[thm]{Lemma and Definition}
\newtheorem{construction}[thm]{Construction}
\newtheorem{notations}[thm]{Notations}
\newtheorem{question}[thm]{Question}
\newtheorem{problem}[thm]{Problem}
\newtheorem{remark}[thm]{Remark}
\newtheorem{remarks}[thm]{Remarks}
\newtheorem{statement}[thm]{Statement}
\newtheorem{definition}[thm]{Definition}
\newtheorem{claim}[thm]{Claim}
\newtheorem{assumption}[thm]{Assumption}
\newtheorem{assumptions}[thm]{Assumptions}
\newtheorem{properties}[thm]{Properties}
\newtheorem{example}[thm]{Example}
\newtheorem{conjecture}[thm]{Conjecture}
\newtheorem{proposition and definition}[thm]{Proposition and Definition}
\numberwithin{equation}{thm}
\newcommand{\Spec}{\mathrm{Spec}}
\newcommand{\pP}{{\mathfrak p}}
\newcommand{\sA}{{\mathcal A}}
\newcommand{\sB}{{\mathcal B}}
\newcommand{\sC}{{\mathcal C}}
\newcommand{\sD}{{\mathcal D}}
\newcommand{\sE}{{\mathcal E}}
\newcommand{\sF}{{\mathcal F}}
\newcommand{\sG}{{\mathcal G}}
\newcommand{\sH}{{\mathcal H}}
\newcommand{\sI}{{\mathcal I}}
\newcommand{\sJ}{{\mathcal J}}
\newcommand{\sK}{{\mathcal K}}
\newcommand{\sL}{{\mathcal L}}
\newcommand{\sM}{{\mathcal M}}
\newcommand{\sN}{{\mathcal N}}
\newcommand{\sO}{{\mathcal O}}
\newcommand{\sP}{{\mathcal P}}
\newcommand{\sQ}{{\mathcal Q}}
\newcommand{\sR}{{\mathcal R}}
\newcommand{\sS}{{\mathcal S}}
\newcommand{\sT}{{\mathcal T}}
\newcommand{\sU}{{\mathcal U}}
\newcommand{\sV}{{\mathcal V}}
\newcommand{\sW}{{\mathcal W}}
\newcommand{\sX}{{\mathcal X}}
\newcommand{\sY}{{\mathcal Y}}
\newcommand{\sZ}{{\mathcal Z}}
% Sonderbuchstaben mit Doppellinie
\newcommand{\A}{{\mathbb A}}
\newcommand{\B}{{\mathbb B}}
\newcommand{\C}{{\mathbb C}}
\newcommand{\D}{{\mathbb D}}
\newcommand{\E}{{\mathbb E}}
\newcommand{\F}{{\mathbb F}}
\newcommand{\G}{{\mathbb G}}
\renewcommand{\H}{{\mathbb H}}
\newcommand{\I}{{\mathbb I}}
\newcommand{\J}{{\mathbb J}}
\renewcommand{\L}{{\mathbb L}}
\newcommand{\M}{{\mathbb M}}
\newcommand{\N}{{\mathbb N}}
\renewcommand{\P}{{\mathbb P}}
\newcommand{\Q}{{\mathbb Q}}
\newcommand{\Qbar}{\overline{\Q}}
\newcommand{\R}{{\mathbb R}}
\newcommand{\SSS}{{\mathbb S}}
\newcommand{\T}{{\mathbb T}}
\newcommand{\U}{{\mathbb U}}
\newcommand{\V}{{\mathbb V}}
\newcommand{\W}{{\mathbb W}}
\newcommand{\Z}{{\mathbb Z}}
\newcommand{\g}{{\gamma}}
\newcommand{\id}{{\rm id}}
\newcommand{\rk}{{\rm rank}}
\newcommand{\END}{{\mathbb E}{\rm nd}}
\newcommand{\End}{{\rm End}}
\newcommand{\Hom}{{\rm Hom}}
\newcommand{\Hg}{{\rm Hg}}
\newcommand{\tr}{{\rm tr}}
\newcommand{\Sl}{{\rm Sl}}
\newcommand{\Gl}{{\rm Gl}}
\newcommand{\Cor}{{\rm Cor}}

\newcommand{\SO}{{\rm SO}}
\newcommand{\OO}{{\rm O}}
\newcommand{\SP}{{\rm SP}}
\newcommand{\Sp}{{\rm Sp}}
\newcommand{\UU}{{\rm U}}
\newcommand{\SU}{{\rm SU}}
\newcommand{\SL}{{\rm SL}}

%added by Yichao TIAN
\newcommand{\ra}{\rightarrow}
\newcommand{\xra}{\xrightarrow}
\newcommand{\la}{\leftarrow}
\newcommand{\Nm}{\mathrm{Nm}}
\newcommand{\Gal}{\mathrm{Gal}}
\newcommand{\Res}{\mathrm{Res}}
\newcommand{\GL}{\mathrm{GL}}

\newcommand{\GSp}{\mathrm{GSp}}
\newcommand{\Tr}{\mathrm{Tr}}

\newcommand{\bA}{\mathbf{A}}
\newcommand{\bK}{\mathbf{K}}
\newcommand{\bM}{\mathbf{M}} %Shimura curves
\newcommand{\bP}{\mathbf{P}}
\newcommand{\bC}{\mathbf{C}}
\newcommand{\HIG}{\mathrm{HIG}}
\newcommand{\MIC}{\mathrm{MIC}}
\newcommand{\Aut}{\mathrm{Aut}}
%%%%%%%%%%%%%%%%%%%%%%%%%%%%%%%%%%%%%%%%%%%%%%%%%%%%%%%%
\thanks{This work is supported by the SFB/TR 45 `Periods, Moduli
Spaces and Arithmetic of Algebraic Varieties' of the DFG, and also by the University of Science and Technology of China.}
\begin{abstract}
Let $k$ be a perfect field of odd characteristic and $X$ a smooth algebraic variety over $k$ which is $W_2$-liftable. We show that the exponent twisiting of the classical Cartier descent gives an equivalence of categories between the category of nilpotent Higgs sheaves of exponent $\leq p-1$ over $X/k$ and the category of nilpotent flat sheaves of exponent $\leq p-1$ over $X/k$, and it is equivalent up to sign to the inverse Cartier and Cartier transforms for these nilpotent objects constructed in the nonabelian Hodge theory in positive characteristic by Ogus-Vologodsky \cite{OV}. In view of the crucial role that Deligne-Illusie's lemma has ever  played in their algebraic proof of $E_1$ degeneration and Kodaira vanishing theorem in abelian Hodge theory, it may not  be overly surprising that again this lemma plays a significant role via the concept of Higgs-de Rham flow \cite{LSZ2013} in establishing $p$-adic Simpson correspondence in nonabelian Hodge theory and Langer's algebraic proof of Bogomolov inequality for semistable Higgs bundles and Miyaoka-Yau inequality \cite{Langer}.
\end{abstract}

\maketitle

\section{Introduction}
Let $k$ be a perfect field with positive characteristic and $X$ a smooth algebraic variety over $k$. We have the commutative diagram of Frobenii
 \begin{diagram}
  X&\rTo^{F_{X/k}} &X'    &\rTo^{\pi}    &X\\
     &\rdTo  &\dTo    &              &\dTo\\
     &       &{\rm Spec}\ k&\rTo^{\sigma} &{\rm Spec}\ k.\\
 \end{diagram}
In the above diagram, the composite of the relative Frobenius $F_{X/k}$ with $\pi$ is the absolute Frobenius  map $F_X$ of $X$, and $\sigma$ is the absolute Frobenius of ${\rm Spec}\ k$. A Higgs sheaf over $X/k$ is a pair $(E,\theta)$ where $E$ is a coherent sheaf of $\sO_X$-modules and $\theta: E\to E\otimes \Omega_{X/k}$ an $\sO_X$-linear morphism satisfying the integrability condition $\theta\wedge \theta=0$. It is said to be nilpotent of exponent $\leq n$ if for all local sections $\partial_1,\cdots,\partial_n$ of $T_{X/k}$,
$$
\theta(\partial_1)\cdots\theta(\partial_n)=0.
$$
Let $\HIG$ be the category of Higgs sheaves over $X/k$ and $\HIG_{\leq n}$ the full subcategory of nilpotent Higgs sheaves of exponent $\leq n$. Note that $\HIG_1$ is just the category of coherent sheaves of $\sO_X$-modules. On the other hand, we introduce the category $\MIC$ of flat sheaves over $X/k$. A flat sheaf over $X/k$ is a pair $(H,\nabla)$ with $H$ a coherent sheaf of $\sO_X$-modules together with an integrable $k$-connection $\nabla: H\to H\otimes \Omega_{X/k}$. To each $(H,\nabla)\in \MIC$ one associates the $p$-curvature map $\psi: H\to H\otimes F_X^*\Omega_{X/k}$, which is $\sO_X$-linear and satisfies $\psi\wedge\psi=0$ (see \S5.0.9 \cite{KA}). Following Definition 5.6 \cite{KA}, $(H,\nabla)$ is said to be nilpotent of exponent $\leq n$ if for all local sections $\partial_1,\cdots,\partial_n$ of $T_{X/k}$,
$$
\psi(\partial_1)\cdots\psi(\partial_n)=0.
$$
Let $\MIC_{n}$ be the full subcategory of nilpotent flat sheaves over $X/k$ of exponent $\leq n$. Note $\MIC_1$ is the category of flat sheaves with vanishing $p$-curvature. The classical Cartier descent theorem is as follows:
\begin{theorem}[Theorem 5.1 \cite{KA}]\label{Cartier}
There is an equivalence of categories between $\HIG_1$ and $\MIC_1$. Explicitly, one associates $(F_X^*E,\nabla_{can})$ to $(E,0)\in \HIG_1$ and conversely, one associates $\pi_{*}H^{\nabla}$ to $(H,\nabla)\in \MIC_1$.
\end{theorem}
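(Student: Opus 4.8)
The plan is to show that the two functors in the statement are mutually quasi-inverse; almost all of the content sits in one local isomorphism of Frobenius-descent type. Write $C^{-1}\colon\HIG_1\to\MIC_1$ for $(E,0)\mapsto(F_X^*E,\nabla_{can})$, where on $F_X^*E=\sO_X\otimes_{F_X^{-1}\sO_X}F_X^{-1}E$ one puts $\nabla_{can}(f\otimes e)=df\otimes e$; and write $C\colon\MIC_1\to\HIG_1$ for $(H,\nabla)\mapsto\pi_*H^{\nabla}$, where $H^{\nabla}=\ker(\nabla\colon H\to H\otimes\Omega_{X/k})$ is a module over $\ker(d\colon\sO_X\to\Omega_{X/k})=\sO_X^p\cong\sO_{X'}$ (identification via $F_{X/k}$, using $k$ perfect; $\pi$ is a homeomorphism so $\pi_*$ merely relabels). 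First I would check that $C^{-1}$ really lands in $\MIC_1$: the $p$-curvature $\psi$ is $\sO_X$-linear in the module variable and $\psi(\partial)(1\otimes e)=0$ since $1\otimes e$ is $\nabla_{can}$-flat, and such sections generate, so $\psi\equiv0$. It then remains to build natural isomorphisms $C\circ C^{-1}\cong\id$ and $C^{-1}\circ C\cong\id$.

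The composite $C\circ C^{-1}$ is easy: checking on a local frame (or by presenting $E$ by free modules), $(F_X^*E)^{\nabla_{can}}$ is the subsheaf $1\otimes F_X^{-1}E$, whose $\sO_{X'}$-module structure is carried back to $(E,0)$ by $\pi_*$. The substance is $C^{-1}\circ C\cong\id$: for $(H,\nabla)\in\MIC_1$ one must show the evaluation map $F_X^*(\pi_*H^{\nabla})\to H$, $f\otimes s\mapsto fs$, is an isomorphism of $\sO_X$-modules intertwining $\nabla_{can}$ with $\nabla$. This is local, so I would take $X=\Spec A$ with $A$ \'etale over $k[x_1,\dots,x_n]$, let $\partial_i$ be dual to $dx_i$ (so $\partial_i^{[p]}=0$ and $[\partial_i,\partial_j]=0$), and set $D_i:=\nabla(\partial_i)$; then the vanishing of the $p$-curvature becomes $D_i^p=0$ for all $i$, the $D_i$ commute, and $D_i(as)=\partial_i(a)s+aD_i(s)$.

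On such a chart I would introduce the exponential-type operator
$$
\Phi(s)=\sum_{0\le a_1,\dots,a_n\le p-1}\frac{(-x_1)^{a_1}\cdots(-x_n)^{a_n}}{a_1!\cdots a_n!}\,D_1^{a_1}\cdots D_n^{a_n}(s),
$$
a finite sum because $D_i^p=0$. A short Leibniz computation, using $D_i^p=0$, gives $D_i\Phi(s)=0$ for every $i$, so $\Phi$ is an $A^p$-linear retraction of $H$ onto $H^{\nabla}$. Since $\{x_1^{a_1}\cdots x_n^{a_n}:0\le a_i\le p-1\}$ is an $A^p$-basis of $A$ (here $k$ is perfect and $A$ is \'etale over $k[x_1,\dots,x_n]$, which also gives $A^{\nabla}=A^p$), a truncated-Taylor identity shows that $s\mapsto\sum x_1^{a_1}\cdots x_n^{a_n}\otimes\frac1{a_1!\cdots a_n!}\,\Phi(D_1^{a_1}\cdots D_n^{a_n}s)$ is a two-sided inverse of the evaluation map $A\otimes_{A^p}H^{\nabla}\to H$; in particular $H^{\nabla}$ is coherent over $A^p$ since $A$ is faithfully flat over $A^p$ and $H$ is coherent. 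These local isomorphisms are canonical, so they glue to a global $F_X^*(\pi_*H^{\nabla})\xrightarrow{\ \sim\ }H$, and by construction it intertwines $\nabla_{can}$ with $\nabla$; this is the isomorphism $C^{-1}\circ C\cong\id$, and the theorem follows.

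The hard part will be precisely this last isomorphism — not the formal interplay of the two functors, but the verification that $\Phi$ genuinely inverts $A\otimes_{A^p}H^{\nabla}\to H$, which is the arithmetic heart of Cartier descent. The two indispensable inputs are the flatness (indeed \'etale-local freeness) of $A$ over $A^p$, coming from smoothness of $X$ over the perfect field $k$, and the identity $\partial_i^{[p]}=0$ for coordinate derivations, which is what turns the $p$-curvature hypothesis into the pointwise nilpotency $D_i^p=0$ that simultaneously makes $\Phi$ a finite sum and makes the Leibniz cancellation in $D_i\Phi(s)=0$ work.
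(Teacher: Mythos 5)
Your proposal is correct, but note that the paper itself offers no proof of this statement: it is quoted verbatim as Theorem 5.1 of Katz \cite{KA} and used as a black box. Your argument — reducing everything to the evaluation map $F_X^*(\pi_*H^{\nabla})\to H$ and inverting it via the truncated exponential projector $\Phi(s)=\sum_{\underline{a}}\frac{(-x)^{\underline{a}}}{\underline{a}!}D^{\underline{a}}(s)$, with $D_i^p=0$ supplied by the vanishing of the $p$-curvature together with $\partial_i^{[p]}=0$, and the freeness of $A$ over $A^p$ supplied by smoothness over a perfect field — is essentially Katz's original proof, and all the steps (the Leibniz cancellation $D_i\Phi=0$, the binomial/Taylor identity giving the two-sided inverse, and the descent of coherence by faithful flatness) check out. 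It is also a pleasant coincidence, worth noticing, that the operator $\Phi$ you use is of exactly the same ``exponential twisting'' shape as the gluing automorphisms $\exp[h_{\alpha\beta}(F_0^*\theta)]$ that the paper builds on top of this theorem.
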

In the above theorem, $\nabla_{can}$ means the unique connection on $F_{X}^*E$ such that the pullback of any local section of $E$ is flat, and the $k$-subsheaf $H^{\nabla}$ of flat sections is naturally a $\sO_{X'}$-module of the same rank as $H$. By abuse of notations, we omit $\pi_*$ by assuming this identification of an object over $X'$ with the corresponding object over $X$. \\

In the recent spectacular work \cite{OV}, Ogus and Vologodsky have established the nonabelian Hodge theory in positive characteristic. Among other important results, they have generalized Theorem \ref{Cartier} in a far-reaching way, and also the fundamental $p$-curvature formula of the Gauss-Manin connection of Katz \cite{KA71} and the fundamental decomposition theorem of Deligne-Illusie \cite{DI}. A special but essential case of their main construction in loc. cit. is the following
\begin{theorem}[Thereom 2.8 \cite{OV}]
Suppose $X$ is $W_2$-liftable. Then there is an equivalence of categories
$$
\HIG_{p-1}\xrightleftharpoons[\ C\ ]{\ C^{-1}\ }\MIC_{p-1}.
$$
\end{theorem}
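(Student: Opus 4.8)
The plan is to construct the two functors directly, by twisting the classical Cartier descent of Theorem~\ref{Cartier} along the torsor of Frobenius liftings furnished by the $W_2$-lift, and then to match the outcome with the transforms of \cite{OV}. I work throughout with the absolute Frobenius $F_X$, as in Theorem~\ref{Cartier}; fix once and for all a lift $\tilde X$ of $X$ over $W_2(k)$, an affine open cover of $X$, and on each member a lift $\tilde F$ of $F_X$ (such local lifts exist once $\tilde X$ is given).

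On such an affine $U$, the Deligne--Illusie construction produces an $\sO_X$-linear map $\zeta_{\tilde F}\colon F_X^*\Omega_{X/k}\to\Omega_{X/k}$, characterised on generators by $1\otimes d\bar a\mapsto\tfrac1p\,d\tilde F^*(\tilde a)\bmod p$, whose crucial feature is that it sends these generators to \emph{closed} $1$-forms. Given $(E,\theta)\in\HIG_{p-1}$, put $\omega_U:=(\id\otimes\zeta_{\tilde F})\circ F_X^*\theta$, an $\sO_X$-linear $\End(F_X^*E)$-valued $1$-form on $U$ whose local components mutually commute because $\theta\wedge\theta=0$, and define $\nabla_U:=\nabla_{can}+\omega_U$ on $F_X^*E|_U$. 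Since pulled-back endomorphisms are $\nabla_{can}$-flat and $\zeta_{\tilde F}$ takes generators to closed forms, a short calculation gives $d_{\nabla_{can}}\omega_U=0$ and $\omega_U\wedge\omega_U=0$, so $\nabla_U$ is an integrable connection on $F_X^*E|_U$.

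Next is the gluing. On an overlap $U\cap U'$ the discrepancy of the two chosen Frobenius lifts is measured by an element $h\in F_X^*T_{X/k}$, and Deligne--Illusie's lemma yields $\omega_{U'}-\omega_U=d_{\nabla_{can}}N$, where $N:=\iota_hF_X^*\theta\in\End(F_X^*E|_{U\cap U'})$ is the contraction of $F_X^*\theta$ against $h$. Crucially $N^{\,p-1}=0$, since $\theta(\partial_1)\cdots\theta(\partial_{p-1})=0$ --- this is the one place where the exponent bound $\leq p-1$ is genuinely used --- so $G_{U'U}:=\exp(-N)=\sum_{i=0}^{p-2}(-N)^i/i!$ is a well-defined automorphism of $F_X^*E|_{U\cap U'}$. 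As $N$ commutes with all the $\omega$'s, the gauge-transformation identity gives $G_{U'U}\circ\nabla_U\circ G_{U'U}^{-1}=\nabla_U+d_{\nabla_U}N=\nabla_{U'}$, and since the relevant discrepancies add over triple overlaps, so do the $N$'s, whence the cocycle relation $G_{U''U'}\circ G_{U'U}=G_{U''U}$. Therefore the local flat sheaves $(F_X^*E|_U,\nabla_U)$ glue to a global flat sheaf $C^{-1}(E,\theta)$; sending a Higgs morphism $\varphi$ to $F_X^*\varphi$ --- which is horizontal for the $\nabla_U$'s and compatible with the $G$'s --- makes $C^{-1}$ a functor, and a parallel argument shows a different choice of local lifts produces a naturally isomorphic functor.

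To see that $C^{-1}$ lands in $\MIC_{p-1}$, compute the $p$-curvature locally: in étale coordinates, with the naive lift $\tilde F^*x_i=x_i^p$, one gets $\nabla_{\partial_i}=\nabla_{can,\partial_i}+x_i^{p-1}F_X^*\theta(\partial_i)$ with $\partial_i^{[p]}=0$, and Jacobson's formula --- in which every iterated bracket containing two or more copies of $x_i^{p-1}F_X^*\theta(\partial_i)$ vanishes by $\theta\wedge\theta=0$, the exponent bound annihilates the $p$-th-power term, and $\partial_i^{p-1}(x_i^{p-1})=(p-1)!\equiv-1$ disposes of the single surviving bracket --- gives $\psi_\nabla=-F_X^*\theta$; in particular $\psi_\nabla$ is nilpotent of the same exponent as $\theta$, so $C^{-1}(E,\theta)\in\MIC_{p-1}$. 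The quasi-inverse $C$ runs the recipe in reverse: for $(H,\nabla)\in\MIC_{p-1}$ one gauges $\nabla$ locally, by exponentials of the transition data, into the form $\nabla_{can}+\omega$, recovers the underlying Higgs sheaf from the $\nabla_{can}$-part via Theorem~\ref{Cartier} and reads the Higgs field off from $-\psi_\nabla$, and checks --- again using that all the exponential factors commute --- that $C\circ C^{-1}\cong\id$ and $C^{-1}\circ C\cong\id$. Finally, to identify $(C^{-1},C)$ with the transforms of \cite{OV}, I would compare the explicit local formula above with Ogus--Vologodsky's local description of the inverse Cartier transform attached to a Frobenius lift --- equivalently, verify that both functors agree with Theorem~\ref{Cartier} when $\theta=0$, satisfy $\psi=-F_X^*\theta$, and are functorial and tensor-compatible, which pins the transform down --- the only discrepancy being the sign $-1$ in the $p$-curvature, whence the qualifier ``up to sign''. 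The principal obstacle is precisely the gluing step: simultaneously intertwining the local connections and verifying the cocycle condition for the exponential automorphisms, which rests on Deligne--Illusie's description of the difference of two Frobenius lifts and on the vanishing $N^{\,p-1}=0$ forced by the exponent bound.
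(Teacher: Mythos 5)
Your construction of $C^{-1}$ follows the paper's route essentially verbatim: local model $(F_X^*E,\ \nabla_{can}+\zeta_{\tilde F}(F_X^*\theta))$, gluing by exponentials of the contraction of $F_X^*\theta$ against the Deligne--Illusie cochain $h$, integrability from the closedness of $\zeta_{\tilde F}$ on generators together with $\theta\wedge\theta=0$, the cocycle condition from commutativity of the Higgs components and additivity of the $h$'s, and $p$-curvature $\pm F_X^*\theta$. That half is sound (up to sign conventions, which the paper itself only tracks ``up to sign''), and the observation that $N^{p-1}=0$ is exactly where the bound on the exponent enters.

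The genuine gap is in the quasi-inverse. You say that for $(H,\nabla)\in\MIC_{p-1}$ one ``gauges $\nabla$ locally, by exponentials of the transition data, into the form $\nabla_{can}+\omega$'' and then reads off the Higgs sheaf by Theorem \ref{Cartier}. But an arbitrary nilpotent flat sheaf is not presented as $F_X^*E$ with a connection of that shape; the actual recipe is to replace $\nabla$ on each chart by $\nabla'_\alpha=\nabla+\zeta_\alpha(\psi)$, where $\psi$ is the $p$-curvature, and the crux is to prove that $\nabla'_\alpha$ has \emph{vanishing} $p$-curvature so that classical Cartier descent applies to $(H',\psi')$. This is not a formal consequence of anything you have established: your Jacobson-formula computation treats only connections already of the form $\nabla_{can}+\zeta(F_X^*\theta)$, i.e.\ outputs of $C^{-1}$, whereas here the statement is needed for an arbitrary $(H,\nabla)$ with nilpotent $\psi$ of exponent $\leq p-1$. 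The paper devotes its longest technical argument to precisely this point (Mochizuki's trick with the auxiliary modules $F_0^*(S^{\cdot}\Omega)$ and $F_0^*(S^{\cdot}T)$, or equivalently the combinatorial identity of Statement \ref{statement}). Without it you have no functor $C$, hence no proof of essential surjectivity of $C^{-1}$. Two smaller points: the quasi-inverseness $C\circ C^{-1}\cong\mathrm{id}$ and $C^{-1}\circ C\cong\mathrm{id}$ is asserted rather than checked (routine once both constructions are in place, and the signs in your conventions do come out consistently), and your proposed ``pinning down'' of the functors against those of \cite{OV} is only a sketch of what the paper's \S3 carries out explicitly --- though for the bare assertion that an equivalence of categories exists, that identification is not strictly required.
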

\begin{remark}
A $W_2$-lifting $\tilde X$ of $X$ induces the $W_2$-lifting $\tilde{X}':= \tilde X\times_{\Spec\ W_2,\sigma} \Spec\ W_2$ of $X'$. Put $(\sX,\sS)=(X/k,\tilde X'/W_2)$. Then the above functor $C^{-1}$ is given by $C^{-1}_{\sX/\sS}\circ \pi^*$ and $C$ given by $\pi_*C_{\sX/\sS}$, where $C^{-1}_{\sX/\sS}$ (resp. $C_{\sX/\sS}$) is the inverse Cartier transform (resp. Cartier transform) with respect to the pair $(\sX,\sS)$ in \cite{OV}, restricting to the above subcategories. The full categories in loc. cit. have the merit of being tensor categories and their functors are compatible with tensor product.
\end{remark}
The present note is aimed towards readers who wish to understand their fundamental functors (essentially $C^{-1}$ and $C$) in a more explicit (but therefore less elegant) way. Our point of view is that the (inverse) Cartier transform is nothing but the exponential twisting of the Cartier descent theorem. For a technical reason (see \S2), we have to assume ${\rm char}(k)=p$ to be odd from now on. In \S2, we construct a functor $C^{-1}_{\exp}$ from $\HIG_{p-1}$ to $\MIC_{p-1}$ and then a functor $C_{\exp}$ in the converse direction. Then in \S3, we show that there is a natural isomorphism between the functor $C^{-1}_{\exp}$ (resp. $C_{\exp}$) and the functor $C^{-1}$ (resp. $C$). In \S4, we explain that the Gauss-Manin flat bundle of a Fontaine-Faltings module can be reconstructed from the associated graded Higgs bundle via the inverse Cartier transform.

\section{Exponential twisting}

\subsection{Deligne-Illusie's Lemma}
Rewrite $X$ by $X_0$. Choose and then fix a $W_2$-lifting $X_1$ of $X_0$. Then take an affine covering $\mathcal{U}=\{ \tilde{U}_{\alpha}\}_{\alpha\in I}$ of $X_1$ and for each $\tilde{U}_{\alpha}$, take a Frobenius lifting $\tilde{F}_{\alpha}: \tilde{U}_{\alpha}\to \tilde{U}_{\alpha}$ which mod $p$ is the absolute Frobenius $F_0: U_{\alpha}\to U_{\alpha}$. Here $U_{\alpha}$ means the closed fiber of $\tilde{U}_{\alpha}$. In the following, we shall always use $F_0$ for the absolute Frobenius on any variety over $k$. The induced morphism by $\tilde{F}_{\alpha}$ on differential forms over $\tilde{U}_{\alpha}$ is therefore divisible by $p$. The composite of $\sO_{\tilde{U}_{\alpha}}$-morphisms
$$
\tilde{F}_{\alpha}^*\Omega_{\tilde{U}_{\alpha}} \stackrel{\tilde{F}_{\alpha}}{\rightarrow} p\Omega_{\tilde{U}_{\alpha}}\stackrel{\frac{1}{[p]}}{\cong} \Omega_{U_{\alpha}}
$$
induces an $\sO_{U_{\alpha}}$-morphism
$$\zeta_{\alpha}:=\frac{\tilde{F}_{\alpha}}{[p]}: F_0^{*}\Omega_{U_{\alpha}}\to \Omega_{U_{\alpha}}.$$
The basic lemma of Deligne-Illusie in \cite{DI} is the following:
\begin{lemma}\label{lemma D-I} There are homomorphisms $h_{\alpha \beta}:F_0^*\Omega_{U_{\alpha\beta}}\rightarrow \mathcal{O}_{U_{\alpha\beta}}$, satisfying the following two properties:
\begin{itemize}
	\item [(i)]over $F_0^{-1}\Omega_{U_{\alpha\beta}}$ we have $\zeta_{\alpha}-\zeta_{\beta}=dh_{\alpha\beta};$
	\item [(ii)] the cocycle condition over $U_{\alpha\beta\gamma}$: $h_{\alpha\beta} + h_{\beta\gamma}=h_{\alpha\gamma}$.
\end{itemize}
\end{lemma}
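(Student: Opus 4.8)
The plan is to localize on $U_{\alpha\beta}$ and compare the two Frobenius lifts $\tilde F_\alpha,\tilde F_\beta$ at the level of functions. Since both reduce modulo $p$ to $F_0$, for any local lift $\tilde f\in\sO_{\tilde U_{\alpha\beta}}$ of $f\in\sO_{U_{\alpha\beta}}$ the section $\tilde F_\alpha^*(\tilde f)-\tilde F_\beta^*(\tilde f)$ is divisible by $p$, and I set
\[
D_{\alpha\beta}(\tilde f):=\frac1p\bigl(\tilde F_\alpha^*(\tilde f)-\tilde F_\beta^*(\tilde f)\bigr)\in\sO_{\tilde U_{\alpha\beta}}.
\]
The first step is to show that $D_{\alpha\beta}\bmod p$ depends only on $f$, not on the lift $\tilde f$: replacing $\tilde f$ by $\tilde f+p\tilde g$ changes $D_{\alpha\beta}(\tilde f)$ by $\tilde F_\alpha^*(\tilde g)-\tilde F_\beta^*(\tilde g)$, which is $\equiv 0\pmod p$ because each $\tilde F_\bullet^*(\tilde g)\equiv g^{p}\pmod p$. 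Hence $D_{\alpha\beta}\colon\sO_{U_{\alpha\beta}}\to\sO_{U_{\alpha\beta}}$ is a well-defined additive map; it is moreover clear from the construction that $D_{\alpha\beta}$ vanishes on $k$ and satisfies $D_{\alpha\beta}(cf)=c^{p}D_{\alpha\beta}(f)$ for $c\in k$, the two lifts inducing the same Frobenius on the base.

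The second step is the twisted Leibniz rule
\[
D_{\alpha\beta}(fg)=f^{p}D_{\alpha\beta}(g)+g^{p}D_{\alpha\beta}(f),
\]
which falls out of writing $\tilde F_\alpha^*(\tilde f)=\tilde F_\beta^*(\tilde f)+pD_{\alpha\beta}(\tilde f)$ (and likewise for $\tilde g$), multiplying, and using $\tilde F_\beta^*(\tilde f)\equiv f^{p}\pmod p$ to discard the $p^{2}$-term after dividing by $p$. In $F_0^*\Omega_{U_{\alpha\beta}}$ one has $1\otimes d(fg)=f^{p}(1\otimes dg)+g^{p}(1\otimes df)$ and $1\otimes dc=0$ for $c\in k$; together with additivity, these are exactly the relations that the assignment $1\otimes df\mapsto D_{\alpha\beta}(f)$ must respect in order to extend, by $\sO_{U_{\alpha\beta}}$-linearity, to a well-defined homomorphism $h_{\alpha\beta}\colon F_0^*\Omega_{U_{\alpha\beta}}\to\sO_{U_{\alpha\beta}}$. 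This produces the desired $h_{\alpha\beta}$.

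For property (i) I would unwind $\zeta_\alpha$: taking $\tilde f^{\,p}$ as a lift of $f^{p}$ and writing $\tilde F_\alpha^*(\tilde f)=\tilde f^{\,p}+p\tilde a_\alpha$, one gets $\tilde F_\alpha^*(d\tilde f)=d\bigl(\tilde F_\alpha^*\tilde f\bigr)=p\bigl(\tilde f^{\,p-1}d\tilde f+d\tilde a_\alpha\bigr)$, hence $\zeta_\alpha(1\otimes df)=f^{p-1}df+da_\alpha$ with $a_\alpha=\tilde a_\alpha\bmod p$. Subtracting the corresponding identity for $\beta$ gives $(\zeta_\alpha-\zeta_\beta)(1\otimes df)=d(a_\alpha-a_\beta)$, while $a_\alpha-a_\beta=\frac1p\bigl(\tilde F_\alpha^*\tilde f-\tilde F_\beta^*\tilde f\bigr)\bmod p=D_{\alpha\beta}(f)=h_{\alpha\beta}(1\otimes df)$. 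The equality $\zeta_\alpha-\zeta_\beta=dh_{\alpha\beta}$ holds on the generators $1\otimes df$ and then on all of $F_0^{-1}\Omega_{U_{\alpha\beta}}$ because $d(g^{p})=0$ in characteristic $p$; it cannot hold on the full module $F_0^*\Omega_{U_{\alpha\beta}}$ since $dh_{\alpha\beta}$ is not $\sO$-linear, which is why the statement restricts to $F_0^{-1}\Omega_{U_{\alpha\beta}}$. Property (ii) is then immediate: on the generators $1\otimes df$ one has $D_{\alpha\beta}(f)+D_{\beta\gamma}(f)=\frac1p\bigl(\tilde F_\alpha^*\tilde f-\tilde F_\gamma^*\tilde f\bigr)\bmod p=D_{\alpha\gamma}(f)$ by telescoping, and all three maps are $\sO_{U_{\alpha\beta\gamma}}$-linear, so they agree.

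The only point requiring genuine care is the well-definedness in the first two steps — that $D_{\alpha\beta}$ descends modulo $p$ independently of the lift, and that the twisted Leibniz identity really forces $h_{\alpha\beta}$ to be $\sO$-linear on $F_0^*\Omega_{U_{\alpha\beta}}$ rather than merely on the abelian subsheaf $F_0^{-1}\Omega_{U_{\alpha\beta}}$; once this Frobenius-twist bookkeeping is done correctly, (i) and (ii) are formal. Alternatively, after fixing \'etale local coordinates $x_1,\dots,x_n$ on $U_\alpha$ one may define $h_{\alpha\beta}$ directly on the basis $1\otimes dx_i$ and verify both properties there, but the coordinate-free argument above has the advantage of making the cocycle condition transparent.
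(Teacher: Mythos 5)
Your proof is correct, but it takes a different route from the paper's. The paper does not carry out the computation at all: its proof consists of a base-change argument (the Cartesian squares producing $G_\alpha, G_\beta$) whose only purpose is to exhibit $\tilde F_\alpha$ and $\tilde F_\beta$ as two liftings of the absolute Frobenius of $U_{\alpha\beta}$ to $W_2$-schemes over $U'_{\alpha\beta}$, and then invokes Lemma~5.4 of \cite{IL} as a black box to obtain $h_{\alpha\beta}$ with properties (i) and (ii). What you have done is open that black box and reprove Illusie's lemma directly: the well-definedness of $D_{\alpha\beta}=\frac{1}{p}(\tilde F_\alpha^*-\tilde F_\beta^*)$ modulo $p$ independently of the lift, the Frobenius-twisted Leibniz rule $D_{\alpha\beta}(fg)=f^pD_{\alpha\beta}(g)+g^pD_{\alpha\beta}(f)$ matching exactly the relations defining $F_0^*\Omega_{U_{\alpha\beta}}$, and the telescoping identity for the cocycle condition are precisely the content of the cited lemma. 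Your version has the advantage of being self-contained and of making transparent why (i) is asserted only over $F_0^{-1}\Omega_{U_{\alpha\beta}}$ (the failure of $dh_{\alpha\beta}$ to be $\sO$-linear, saved on $1\otimes f\,dg$ only because $d(f^p)=0$); the paper's version is shorter and takes more care with the one point you pass over silently, namely that $\tilde F_\alpha$ and $\tilde F_\beta$ really do restrict to Frobenius liftings on the overlap --- although this is harmless here since Frobenius is the identity on underlying topological spaces. The only cosmetic slip is that $D_{\alpha\beta}(\tilde f)$, being a division by $p$ in a flat $W_2$-algebra, naturally lives in $\sO_{U_{\alpha\beta}}$ (it is well defined only modulo $p$) rather than in $\sO_{\tilde U_{\alpha\beta}}$ as written; your subsequent discussion shows you are aware of this, and it does not affect the argument. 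You should also record explicitly the standing assumption that all Frobenius liftings are compatible with the same lift of $\sigma$ on $W_2$, which is what makes $D_{\alpha\beta}$ vanish on constants.
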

 \begin{proof}
Consider the $W_2$-morphism $G_\alpha: Z'_{\alpha}\to U'_{\alpha\beta}:=U'_{\alpha}\cap U'_{\beta}$ sitting in the following Cartesian diagram:
\[
\begin{CD}
 Z'_{\alpha}@> G_{\alpha}>>  U'_{\alpha\beta} \\
 @Vj'_{\alpha}VV       @VVi'_{\alpha}V\\
 U'_{\alpha} @>F_{\alpha}>> U'_{\alpha}
 \end{CD}
\]
where $i'_{\alpha}$ is the natural inclusion. By reduction modulo $p$, we obtain the following Cartesian square
\[
\begin{CD}
  Z_{\alpha}@>G_0 >>U_{\alpha\beta}     \\
 @Vj_{\alpha}VV       @VVi_{\alpha}V\\
 U_{\alpha} @>F_0>> U_{\alpha}.
 \end{CD}
\]
Thus we see that $Z_{\alpha}$ is $U_{\alpha\beta}$ and $G_{\alpha}: Z'_{\alpha}\to U'_{\alpha}$ is a lifting of the absolute Frobenius $F_0$ over $U_{\alpha\beta}$. Similarly for $(U'_{\beta},F_{\beta})$, we have  $G_{\beta}:Z'_{\beta}\rightarrow U'_{\alpha\beta} $ which is also a lifting of $F_0:U_{\alpha\beta}\rightarrow U_{\alpha\beta}$. Now we apply Lemma 5.4 \cite{IL} to the pair $(G_{\alpha}:Z'_{\alpha}\rightarrow  U'_{\alpha\beta}, \ G_{\beta}:Z'_{\beta}\rightarrow  U'_{\alpha\beta})$ of Frobenis liftings of the absolute Frobenius $F_0$ on $U_{\alpha\beta}$, we get the homomorphisms $h_{\alpha \beta}:F_0^*\Omega_{U_{\alpha\beta}}\rightarrow \mathcal{O}_{U_{\alpha\beta}}$ such that over $F_0^{-1}\Omega_{U_{\alpha\beta}}$ we have $\zeta_{\alpha}-\zeta_{\beta}=dh_{\alpha\beta}$ and  $h_{\alpha\beta}+h_{\beta\gamma}=h_{\alpha\gamma}$.
\end{proof}

\subsection{Inverse Cartier} \label{inverse cartier} Given a Higgs sheaf $(E,\theta)\in \HIG_{p-1}$, we are going to associate to it a flat sheaf $C_{\exp}^{-1}(E,\theta)\in \MIC_{p-1}$. \\

{\itshape Description:} Over each $U_{\alpha}$, we define a local sheaf $H_{\alpha}:=F_0^*(E|_{U_{\alpha}})$ together with a connection over $H_{\alpha}$ by the formula
$$
\nabla_{\alpha}=\nabla_{can}+\zeta_{\alpha}(F_0^*\theta|_{U_{\alpha}})
$$
where $E|_{U_{\alpha}}$ is the restriction of $E$ to $U_{\alpha}$ (similar for the meaning of $\theta|_{U_{\alpha}}$) and $\nabla_{can}$ is the flat connection in the Cartier descent theorem \ref{Cartier}. Over $U_{\alpha\beta}:=U_{\alpha}\cap U_{\beta}$, after Lemma \ref{lemma D-I}, we define
an $\mathcal{O}_{U_{\alpha\beta}}$-linear morphism
$$
h_{\alpha\beta}(F_0^*\theta): F_0^*E|_{U_{\alpha\beta}}\to F_0^*E|_{U_{\alpha\beta}}.
$$
Because $\theta$ is by assumption nilpotent of exponent $\leq p-1$, we are able to define an element in $G_{\alpha\beta}\in \Aut_{\sO_{U_{\alpha\beta}}}(F_0^*E|_{U_{\alpha\beta}})$ by the formula
$$
\exp[h_{\alpha\beta}(F_0^*\theta)]:=\sum_{i=0}^{p-1}\frac{(h_{\alpha\beta}(F_0^*\theta))^i}{i!}.
$$
Then we use the set of local isomorphisms $\{G_{\alpha\beta}\}_{\alpha,\beta\in I}$ to glue $\{(H_{\alpha},\nabla_{\alpha})\}_{\alpha\in I}$, to obtain the claimed flat sheaf $C_{\exp}^{-1}(E,\theta)$ over $X/k$. The verification details are contained in the following steps:\\

{\itshape Step 1: Local sheaves glue.}
It is to show the cocycle condition: $$G_{\beta\gamma}\circ G_{\alpha\beta}=G_{\alpha\gamma}.$$
We compute
\begin{eqnarray*}
 G_{\beta\gamma}\circ G_{\alpha\beta}&=&\exp[h_{\beta\gamma}(F_0^*\theta)]\exp[h_{\alpha\beta }(F_0^*\theta)].\\
  \end{eqnarray*}
It follows from the integrability of Higgs field that the two morphism $h_{\alpha\beta}(F_0^*\theta)$ and $h_{\beta\gamma}(F_0^*\theta)$ commute with each other. Thus we compute further that
\begin{eqnarray*}
  G_{\beta\gamma}\circ G_{\alpha\beta}=\exp[(h_{\beta\gamma}+h_{\alpha\beta})(F_0^*\theta)]=\exp[h_{\alpha\gamma}(F_0^*\theta)]=G_{\alpha\gamma}.
\end{eqnarray*}
The second equality follows from Lemma \ref{lemma D-I} (ii).\\

{\itshape Step 2: Local connections glue.}
It is to show that the local connections $\{\nabla_{\alpha}\}$ coincide on the overlaps, that is
$$
 (G_{\alpha\beta}\otimes id)\circ \nabla_{\alpha}= \nabla_{\beta}\circ G_{\alpha\beta}.
$$
It suffices to show
$$
 \zeta_{\alpha}(F_0^*\theta)= G^{-1}_{\alpha\beta}\circ dG_{\alpha\beta}+ G_{\alpha\beta}^{-1} \circ\zeta_{\beta}(F_0^*\theta)\circ G_{\alpha\beta}.
$$
We see that
$$
 G^{-1}_{\alpha\beta}\circ dG_{\alpha\beta}=-dG^{-1}_{\alpha\beta}\circ G_{\alpha\beta}=dh_{\alpha\beta}(F_0^*\theta),
$$
and
$$
G_{\alpha\beta}^{-1}\circ \zeta_{\beta}(F_0^*\theta)\circ G_{\alpha\beta}= \zeta_{\beta}(F_0^*\theta),
$$
as $G_{\alpha \beta}$ commutes with $\zeta_{\beta}(F_0^*\theta)$ due to the integrability of the Higgs field.
So
$$
G^{-1}_{\alpha\beta}\circ dG_{\alpha\beta}+ G_{\alpha\beta}^{-1} \circ\zeta_{\beta}(F_0^*\theta)\circ G_{\alpha\beta}=dh_{\alpha\beta}(F_0^*\theta)+\zeta_{\beta}(F_0^*\theta)=\zeta_{\alpha}(F_0^*\theta).
$$
The last equality uses Lemma \ref{lemma D-I} (i). \\

{\itshape Step 3: Flatness.} This is a local property. First of all, one has
$$F_0^*(\theta)\wedge F_0^*(\theta)=F_0^*(\theta\wedge \theta)=0,$$ and then
$$
\zeta_{\alpha}(F_0^*\theta)\wedge  \zeta_{\alpha}(F_0^*\theta)=(\bigwedge^2\zeta_{\alpha})(F_0^*\theta\wedge F_0^*\theta)=0.
$$
(Here is the place where we need to exclude $p=2$ as we use the second wedge product.) It is left to show that $d(\zeta_{\alpha}(F_0^*\theta))=0$.
This is done by a local computation: by definition, for $\omega\in \Omega_{U_{\alpha}}$,
$$\zeta_{\alpha}(F_0^*\omega)= \frac{1}{[p]}(dF_{\alpha} (F_{\alpha}^*\omega')),$$
where $\omega'\in \Omega_{U'_{\alpha}}$ is any lifting of $\omega$.
Then
$$d\circ\zeta_{\alpha}(F_0^*\omega)=d\circ \frac{1}{[p]}(dF_{\alpha} (F_{\alpha}^*\omega'))=\frac{1}{[p]}(d\circ dF_{\alpha} (F_{\alpha}^*\omega')).$$
We may write $\omega'=\sum_i f_idg_i$ for $f_i,g_i\in \mathcal{O}_{U'_{\alpha}}$. Then
$$d(dF_{\alpha}(F_{\alpha}^*\omega'))=\sum_i d(F_{\alpha}^*f_i)\wedge d(F_{\alpha}^*g_i)\in p^2\Omega^2_{U'_{\alpha}}=0.$$ Thus $d(\zeta_{\alpha}(F_0^*\omega))=0$. Clearly it follows that $d(\zeta_{\alpha}(F_0^*\theta))=0$. \\

{\itshape Step 4: Nilpotency.} The $p$-curvature of the flat sheaf $C_{\exp}^{-1}(E,\theta)$ over $U_{\alpha}$ takes the form
$$
F_0^*\theta_{U_{\alpha}}: F_{0}^*E|_{U_{\alpha}}\to F_{0}^*E|_{U_{\alpha}}\otimes F_0^*\Omega_{U_{\alpha}}
$$
which is clearly nilpotent of exponent $\leq p-1$.

\subsection{Cartier}\label{cartier transform} We shall do the converse process, namely,  associate a Higgs sheaf $C_{\exp}(H,\nabla)\in \HIG_{p-1}$ to any flat sheaf $(H,\nabla)\in \MIC_{p-1}$. \\

{\itshape Description:} Let $\psi: H\to H\otimes F_0^{*}\Omega_{X/k}$ be the $p$-curvature map of $(H,\nabla)$. Set
$$
H_{\alpha}:=H|_{U_{\alpha}}, \quad \nabla_{\alpha}:=\nabla|_{U_{\alpha}}, \quad \psi_{\alpha}:=\psi|_{U_{\alpha}}.
$$
Define a new connection $\nabla'_{\alpha}$ on $H_{\alpha}$ by the formula:
$$
\nabla_{\alpha}'=\nabla_{\alpha}+ \zeta_{\alpha}(\psi_{\alpha}).
$$
Because of the nilpotency condition on $\psi$, we can use again Lemma \ref{lemma D-I} to define
$$
J_{\alpha\beta}:=\exp(h_{\alpha\beta}(\psi))\in \Aut_{\sO_{U_{\alpha\beta}}}(H|_{U_{\alpha\beta}}).
$$
Then we use the set $\{J_{\alpha\beta}\}_{\alpha,\beta\in I}$ of local isomorphisms to glue $\{H_{\alpha},\nabla_{\alpha}'\}_{\alpha\in I}$, to obtain a new flat sheaf $(H',\nabla')$ whose $p$-curvature vanishes. The $p$-curvature map $\psi$ induces an $F$-Higgs sheaf in a natural way
$$
\psi': H'\to H'\otimes F_0^*\Omega_{X/k},
$$
which is parallel with respect to $\nabla'$. By the Cartier descent theorem \ref{Cartier}, the pair $(H',\psi')$ descend to a Higgs sheaf $C_{\exp}(H,\nabla)$. The verification steps are given as follows:\\

{\itshape Step 1: Local sheaves glue.}  This step follows from Lemma \ref{lemma D-I} (ii) in a similar way to Step 1 of inverse Cartier.  Also, it is direct to check that
$$
 \psi_{\beta}\circ  J_{\alpha\beta}=J_{\alpha\beta} \circ \psi_{\alpha}.
$$
Hence $\psi$ induces an $F$-Higgs sheaf $\psi': H'\to H'\otimes F_0^*\Omega_{X/k}$.   \\

{\itshape Step 2: Local connections glue.} It is to show
 $$\zeta_{\alpha}(\psi)=J^{-1}_{\alpha\beta}\circ dJ_{\alpha\beta}+ J_{\alpha\beta}^{-1}\circ \zeta_{\beta}(\psi)\circ J_{\alpha\beta}.$$
As by Lemma \ref{lemma D-I} (i)
 $$
 J^{-1}_{\alpha\beta}\circ dJ_{\alpha\beta}=-dJ_{\alpha\beta}^{-1}\circ J_{\alpha\beta}= d( \psi(h_{\alpha\beta}))= \zeta_{\alpha}(\psi)-\zeta_{\beta}(\psi),
 $$
 and
 $$
 J_{\alpha\beta}^{-1}\circ\zeta_{\beta}(\psi)\circ J_{\alpha\beta}= \zeta_{\beta}(\psi),
 $$
it follows that
 $$
 J^{-1}_{\alpha\beta}\circ dJ_{\alpha\beta}+ J_{\alpha\beta}^{-1}\circ \zeta_{\beta}(\psi)\circ J_{\alpha\beta}=\zeta_{\alpha}(\psi)-\zeta_{\beta}(\psi) +\zeta_{\beta}(\psi)=\zeta_{\alpha}(\psi).
 $$

{\itshape Step 3: $P$-curvature of the new connection vanishes.}  This is a local check, which is reduced to check an elementary polynomial identity in char $p$ (see Statement \ref{statement} below). The following proof relies on instead a trick due to S. Mochizuki. Take a system of \'{e}tale local coordinates $\{t_1,\cdots,t_d\}$ for $U_{\alpha}$. Write $\sO$ for $\sO_{U_{\alpha}}$ and $\Omega$ for $\Omega_{U_{\alpha}}$ and so on. Consider an auxiliary sheaf $\sO\oplus F_0^*\Omega$, equipped with the connection $\nabla_1$ defined by the formula:
 $$
 (f,g\otimes\omega) \mapsto (df+g\otimes \zeta_{}\left(1\otimes \omega),(1\otimes \omega)\otimes dg\right), \quad f,g\in \sO,\ \omega\in \Omega.
$$
Its $p$-curvature $\psi_{\nabla_1}$ takes a simple form. In fact, it is zero over the factor $\sO$, and for $\omega$,
$$
\psi_{\nabla_1}(1\otimes \partial t_i)(1\otimes dt_j)=(\nabla_{1}(1\otimes \partial t_i))^p(1\otimes dt_j)=(\frac{\partial}{\partial t_i})^{p-1}(\partial t_i\lrcorner \zeta_{\alpha}(1\otimes dt_j))=-\delta_{ij},
$$
where $\delta_{ij}$ is the Dirichlet delta-function. So we obtain the formula for $\psi_{\nabla_1}$:
$$
\psi_{\nabla_1}(f,g\otimes \omega)=-(g,0)\otimes \omega.
$$
Next we extend the connection $\nabla_1$ to the connection $\nabla_2$ on
$$
F_0^*(S^\cdot \Omega)=\sO\oplus F_0^*\Omega\oplus F_0^*S^2\Omega\oplus \cdots
$$
by the Leibniz rule. From above, it is easy to see the formula for its $p$-curvature $\psi_{\nabla_2}$:
$$
\psi_{\nabla_2}(1\otimes \partial t_i)(1\otimes \omega)=-1\otimes \partial t_i \lrcorner \omega, \quad \omega\in S^\cdot \Omega.
$$
Finally, $\nabla_2$ induces the dual connection $\nabla_3$ on its dual $F_0^*(S^{\cdot}T)$. Set
$$f_{ij}:=\partial t_j \lrcorner \zeta_{\alpha}(1\otimes dt_i)$$, $1_{\Omega}$ the unit of $F_0^*(S^\cdot \Omega)$, $1_T$ the unit of $F_0^*(S^\cdot T)$, and $<\cdot>$ the natural pairing between $F_0^*(S^\cdot T)$ and $F_0^*(S^\cdot \Omega)$. It holds that
$$
 <\nabla_{3}(\partial t_i)(1_T),1_{\Omega}>=-<1_T,\nabla_{2}(\partial t_i)(1_{\Omega})>=0,
 $$
and
 $$
 <\nabla_{3}(\partial t_i)(1_T), 1\otimes dt_j>=-<1_T, \nabla_{2}(\partial t_i)(1\otimes dt_j>=-f_{ji},
 $$
 and for $m\geq 2$
 $$
 <\nabla_{3}(\partial t_i),1\otimes dt_{i_1}\cdots dt_{i_m}>=0.
 $$
 So we obtain the formula for $\nabla_3$:
 \begin{equation}
  \nabla_{3}(\partial t_i)(1_T)=\sum_{j=1}^d -f_{ji}\otimes \partial t_j.
 \end{equation}
It follows that for its $p$-curvature $\psi_{\nabla_3}$, one has
\begin{eqnarray*}
&&<\psi_{\nabla_3}(1\otimes \partial t_i)(1\otimes(\partial t_1)^{i_1}\cdots (\partial t_d)^{i_d}),1\otimes (dt_1)^{j_1}\cdots (dt_d)^{j_d}>\\
 &=&<[\nabla_3(\partial t_i)]^p(1\otimes(\partial t_1)^{i_1}\cdots (\partial t_d)^{i_d}),1\otimes (dt_1)^{j_1}\cdots (dt_d)^{j_d}>\\
 &=&-<1\otimes(\partial t_1)^{i_1}\cdots (\partial t_d)^{i_d}, [\nabla_2(\partial t_i)]^p(1\otimes (dt_1)^{j_1}\cdots (dt_d)^{j_d})>\\
 &=&-<1\otimes(\partial t_1)^{i_1}\cdots (\partial t_d)^{i_d},\psi_{\nabla_2}(1\otimes \partial t_i)(1\otimes (dt_1)^{j_1}\cdots (dt_d)^{j_d})>\\
 &=&<1\otimes(\partial t_1)^{i_1}\cdots (\partial t_d)^{i_d}, 1\otimes\partial t_i\lrcorner( (dt_1)^{j_1}\cdots (dt_d)^{j_d})>\\
 &=&<1\otimes \partial t_i(\partial t_1)^{i_1}\cdots (\partial t_d)^{i_d}, 1\otimes (dt_1)^{j_1}\cdots (dt_d)^{j_d}>.
\end{eqnarray*}
So for $\tau\in F_0^*(S^\cdot T)$, $ \upsilon\in F_0^*T$,
 \begin{equation}\label{multiplication}
 \psi_{\nabla_3}(\upsilon)(\tau)= \upsilon\cdot \tau.
 \end{equation}
Now we let $F_0^*T$ act on $F_0^*(S^\cdot T)$ via $\psi_{\nabla_3}$, which extends to an $L:=F_0^*(S^\cdot T)$-action on $F_0^*(S^\cdot T)$. By Formula \ref{multiplication}, this action is just the multiplication map. So $F_0^*(S^\cdot T)$ is a rank one free module over $L$ with the basis $1_T$. Recall that $L$ acts on $H$ via the $p$-curvature map $\psi$. Therefore there is an isomorphism of $\sO$-modules:
$$
 \lambda:  \mathscr{H}om_{L}(F_0^*(S^\cdot T),H)\cong H,\quad \phi\mapsto \phi(1_T).
$$
The connection $\nabla_3$ on $F_0^*(S^{\cdot}T)$ and $\nabla$ on $H$ induces the connection $\nabla_4$ on $\mathscr{H}om_{L}(F_0^*(S^\cdot T),H)$, which via the above isomorphism $\lambda$ induce the connection $\nabla''$ on $H$. Claim that $\nabla''=\nabla'$. Note that the claim completes this step, since $L$ acts on $\mathscr{H}om_{L}(F_0^*(S^\cdot T),H)$ tautologically by zero and therefore the $p$-curvature map of $\nabla''$ is simply the zero map. So, for any $\phi\in \mathscr{H}om_{L}(F_0^*(S^\cdot T),H)$ and the corresponding $e:=\phi(1_T)$, one calculates that
\begin{eqnarray*}
\nabla''(\partial t_i)(e)&=&(\nabla_{4}(\partial t_i)\phi)(1_T)\\
&=&\nabla(\partial t_i)(\phi(1_T))-\phi(\nabla_{3}(\partial t_i)(1_T))\\
&=&\nabla (\partial t_i)(e)+\sum_{j=1}^d\phi(f_{ji}\otimes\partial t_j)\\
&=& \nabla (\partial t_i)(e)+\sum_{j=1}^d\phi(\psi_{\nabla_3}(f_{ji}\otimes\partial t_i)(1_T))\\
&=&\nabla(\partial t_i)(e)+\sum_{j=1}^d\psi_{\nabla_3}(f_{ji}\otimes\partial t_j)(\phi(1_T))\\
&=&\nabla(\partial t_i)(e)+\partial t_i\lrcorner\zeta(\psi(e)).
\end{eqnarray*}
Thus $\nabla''=\nabla+\zeta(\psi)=\nabla'$ as claimed.\\

{\itshape Step 4: Nilpotency.} It is clear that the so-obtained Higgs sheaf $C_{\exp}(H,\nabla)$ is nilpotent of exponent $\leq p-1$.

\begin{remark} One could probably verify directly the following statement to avoid using Mochizuki's trick:
\begin{statement}\label{statement}
Let $n$ denote the nilpotent exponent of the connection, then $n\leq p-1$.
 For any $k\leq n$, set
 $$
   f_k(T_1,\cdots, T_k)=\sum_{0\leq a_1+\cdots a_k\leq
   p-k}(1+T_k)^{a_k}(1+T_k+T_{k-1})^{a_{k-1}}\cdots (1+T_k+\cdots
   T_1)^{a_1}
 $$
 here, the index $a_1,\cdots,a_k$ are natural numbers. And set
$$
 F_k(T_1,\cdots T_k)=\sum_{\sigma\in \mathcal{S}_k}f_k(T_{\sigma(1)},\cdots
 T_{\sigma(k)}),
$$
here $\mathcal{S}_k$ is the permutation group of $k$ elements. Then  for  $k>1$, $F_k$ mod $p$ is zero.
\end{statement}
\end{remark}
\begin{remark}
It is not difficult to see that $C^{-1}_{\exp}$ and $C_{\exp}$ (up to isomorphisms) depend neither on the choices of coverings $\sU$ nor on the choices of Frobenius liftings. However, they depend on the choice of $W_2$-liftings of $X$. Also, it is clear that they respect a $G$-structure on flat sheaves or Higgs sheaves for any subgroup $G\subset \GL$.
\end{remark}

\section{Equivalence of two constructions}
In this section we want to show that our construction coincides with Ougs-Vologodsky's abstract construction up to a minus sign. The notions in \cite{OV} will be directly applied in the following.  \\

For $(E,\theta)\in \HIG_{p-1}$, let $(E',\theta'):=\pi^*(E,\theta)$ be the pull-back Higgs sheaf over $X'_0$. Then $C_{(\sX,\sS)}^{-1}(E',\theta')$ is defined as
$$
(M,\nabla_M):=\mathscr{B}_{\mathscr{X}/\mathscr{S}}\otimes_{\hat{\Gamma}_{\cdot}T_{X'_0}}\iota^*(E').
$$
Now for  $(\tilde{U}_{\alpha}, \tilde{F}_{\alpha})$, we set $U_{\alpha}':=U_{\alpha}\times_{W_2,\sigma} W_2$ and $F'_{\alpha}$ the composite of $\tilde{F}_{\alpha}$ and $\tilde{\pi}^{-1}$. They provide an isomorphism
$$
\sigma_{\alpha}:\mathscr{B}_{\mathscr{X}/\mathscr{S}}|_{U_{\alpha}}\cong F^*\hat{\Gamma}_{\cdot} T_{U_{\alpha}'},
$$
which induces isomorphisms:
$$
 \xi_{\alpha}:M|_{U_{\alpha}}\cong  F^*\hat{\Gamma}_{\cdot}T_{U_{\alpha}'}\otimes_{\hat{\Gamma}_{\cdot}T_{U_{\alpha}'}} \iota^*E'\cong  F^*_0\hat{\Gamma}_{\cdot}T_{U_{\alpha}}\otimes_{\hat{\Gamma}_{\cdot}T_{U_{\alpha}}} \iota^*E,
$$
and

$$
\varpi_{\alpha}: F^*_0\hat{\Gamma}_{\cdot}T_{U_{\alpha}}\otimes_{\hat{\Gamma}_{\cdot}T_{U_{\alpha}}} \iota^*E\cong F_0^*\iota^*E\cong F_0^*E.
$$
Set $\eta_{\alpha}:=\varpi_{\alpha}\circ \xi_{\alpha}$. Then under the isomorphism $\eta_{\alpha}$, $\nabla_M$ induces a connection $\nabla_{\alpha}$ on $F_0^*E$. Then for any local section $e$ of  $E$ over $U_{\alpha}$,
\begin{eqnarray*}
\nabla_{\alpha}(\partial t_k)(1\otimes e)&=&\varpi_{\alpha}(\nabla_{\partial t_k}(1_T)\otimes e)\\
&=&\varpi_{\alpha}(\sum_{i=1}^d(f_{ik}\otimes\partial t_i)\otimes e)\\
&=&-\sum_{i=1}^d\varpi_{\alpha}(f_{ik}\otimes\theta_{\partial t_i}(e))\\
&=&-\sum_{i=1}^df_{ik}\varpi_{\alpha}( 1\otimes\theta_{\partial t_i}(e))\\
&=&-\partial t_k\lrcorner \zeta_{\alpha}(\theta(e)).
\end{eqnarray*}
Notice that the connection on $F_0^*\hat{\Gamma}_{\cdot}T_{U_{\alpha}}$ in \cite{OV} differs from ours by a minus sign. So
$$\nabla_{\alpha}=\nabla_{can} -\zeta(\theta).$$
Now  on the overlap $U_{\alpha\beta}$, $\sigma_{\beta}\circ\sigma_{\alpha}^{-1}$ is just the multiplication map by $\exp(h_{\alpha\beta})$, here $h_{\alpha\beta}$ is considered as a local section of $F^*T_{U_{\alpha\beta}'}$. So if we set $\bar{J}_{\alpha\beta}:= \eta_{\beta}\circ \eta_{\alpha}^{-1}$, then
$$
\bar{J}_{\alpha\beta}(1\otimes e)= \exp(h_{\alpha\beta})\otimes \pi^*(e)= \exp(-h_{\alpha\beta}(F_0^*\theta(e))).
$$
It follows that the inverse Cartier transform $C_{(\sX,\sS)}^{-1}(\pi^*(E,\theta))$ is equivalent to using
$$\{\bar{J}_{\alpha\beta}=\exp(-h_{\alpha\beta}(F_0^*\theta)) \}$$
to glue the local models
$$
\{(M_{\alpha}=F_0^*E|_{U_{\alpha}}, \nabla_{\alpha}=\nabla_{can}-\zeta_{\alpha}(\theta))\},
$$
which is just $C_{\exp}^{-1}(E,-\theta)$.\\

On the other hand, given a flat sheaf $(H,\nabla)\in \MIC_{p-1}$, its Cartier transform $C_{(\sX,\sS)}(H,\nabla)$ is defined by
$$
(E',\theta')= \iota^*\mathscr{H}om_{D^{\gamma}_{X_0/S}}(\mathscr{B}_{\mathscr{X}/\mathscr{S}},H).
$$
Let $D_{X_0/k}$ be the sheaf of PD-differential operators on $X_0$. Set $D_{\alpha}:=D_{X_0/k}|_{U_{\alpha}}$, and $(E,\theta):=\pi_*(E',\theta')$. As the $p$-curvature map $\psi$ of $(H,\nabla)$ is nilpotent of exponent $\leq p-1$, the above $\sigma_{\alpha}$ induces isomorphisms:
$$
 \mu_{\alpha}:E|_{U_{\alpha}}\cong \iota^*\mathscr{H}om_{F^*_0T_{U_{\alpha}}}(F^*_0T_{U_{\alpha}},H)^{D_{\alpha}},
$$
and
$$
  \kappa_{\alpha}:\iota^*\mathscr{H}om_{F^*_0T_{U_{\alpha}}}(F^*_0T_{U_{\alpha}},H)^{D_{\alpha}} \cong  H_{\alpha} ^{\nabla_{\alpha}'}.
$$
Here $\nabla_{\alpha}'$ is the connection on $H_{\alpha}$ induced from that on $\mathscr{H}om_{F^*_0T_{U_{\alpha}}}(F^*_0T_{U_{\alpha}},H_{\alpha})$ which is in turn induced by the connection $\nabla_{T_{\alpha}}$ on $F^*_0T_{U_{\alpha}}$ and $\nabla$ on $H$. Now the calculation in the Step 3 of our Cartier construction in \S2 shows that
$$\nabla_{\alpha}'= \nabla_H+ \zeta_{\alpha}(\psi).$$
Set $\rho_{\alpha}:=\kappa_{\alpha}\circ \mu_{\alpha}$. Then via the isomorphism $\rho_{\alpha}$, the Higgs field $\theta$ induces a Higgs field ${\theta_{\alpha}}$ on $ H^{\nabla_{\alpha}'} $. Then for any  local section $e=\phi(1_T)$ of $H$ over $U_{\alpha}$ annihilated by $\nabla_{\alpha}'$ with $\phi$ a local section of $\mathscr{H}om_{F^*_0T_{U_{\alpha}}}(F^*_0T_{U_{\alpha}},H)$ over $U_{\alpha}$, one has
$$
\theta_{\alpha}(\partial t_k)(e)= \kappa_{\alpha}(\psi(\partial t_k)\circ\phi)=-\psi(\partial t_k)(\phi(1_T))=-\psi(\partial t_k)(e).
$$
That means $\theta_{\alpha}= -\psi$. Now on the overlap $U_{\alpha\beta}$, if we set $J_{\alpha\beta}:=\rho_{\beta}\circ \rho_{\alpha}^{-1}$, then for $e\in H^{\nabla'_{\alpha}}$, with $e=\phi(1_T)$ for $\phi\in \mathscr{H}om_{F^*_0T_{U_{\alpha}}}(F^*_0T_{U_{\alpha}},H)$, one has
\begin{eqnarray*}
J_{\alpha\beta}(e)&=& \phi(\exp(h_{\beta\alpha}))\\
&=&\phi(\psi_T(\exp(h_{\alpha\beta}))(1_T))\\
&= &\exp(\psi(h_{\alpha\beta}))(\phi(1_T))\\
&=&\exp(\psi(h_{\alpha\beta}))(e),
\end{eqnarray*}
which means
$$
J_{\alpha\beta}=\exp(\psi(h_{\alpha\beta})).
$$
It follows that the Cartier transform $\pi_*C_{(\sX,\sS)}(H,\nabla)$ is naturally isomorphic to $C_{\exp}(H,-\nabla)$.

\section{Gauss-Manin connection of a Fontaine-Faltings module}
Let ${\bf X}$ be a smooth scheme over $W$. G. Faltings, generalizing the work of Fontaine-Laffaille to a geometric base, introduced the category $\mathcal{MF}^{\nabla}_{[0,n]}({\bf X}/W), n\leq p-1$ in \cite{FA}. An object in this category shall be called a Fontaine-Faltings module. Under a mild condition, for a smooth proper morphism $f:{\bf  Y}\to {\bf X}$ over $W$, the higher direct images of the constant crystal over ${\bf Y}/W$ are objects in this category. In that case, let $H$ be the hypercohomology of the relative de Rham complex of $f$, $\nabla$ the Gauss-Manin connection, $Fil$ the Hodge filtration and $\Phi$ the relative Frobenius. Then the four tuple $(H,\nabla,Fil,\Phi)$ makes a Fontaine-Faltings module coming from geometry. In general, a Fontaine-Faltings module may not come from geometry. We intend to point out a relation of the mod $p$ reduction of $(H,\nabla)$ with the above theory. We assume from now on that $(H,\nabla,Fil,\Phi)$ is a strict $p$-torsion Fontaine-Faltings module, i.e. $pH=0$ (one considers otherwise its mod $p$-reduction). Let $(E,\theta)=Gr_{Fil}(H,\nabla)$ be the graded Higgs bundle. We show the following
\begin{proposition}\label{global iso}
The relative Frobenius $\Phi$ induces an isomorphism of flat sheaves:
$$\tilde{\Phi}: C^{-1}(E,-\theta)\cong (H,\nabla).$$
\end{proposition}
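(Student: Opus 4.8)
The plan is to unwind Faltings' construction of the relative Frobenius $\Phi$ \cite{FA} on a strict $p$-torsion Fontaine-Faltings module, over the affine covering $\mathcal{U}=\{\tilde U_\alpha\}$ with Frobenius liftings $\tilde F_\alpha$ fixed in \S2, and to match it term by term with the explicit local model used there to construct $C^{-1}(E,-\theta)=C^{-1}_{\exp}(E,\theta)$. Recall from \S\ref{inverse cartier} that over $U_\alpha$ this flat sheaf is $\big(F_0^*E|_{U_\alpha},\,\nabla_{can}+\zeta_\alpha(F_0^*\theta)\big)$, and that these pieces are glued along $U_{\alpha\beta}$ by $G_{\alpha\beta}=\exp[h_{\alpha\beta}(F_0^*\theta)]$. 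Hence it suffices to produce, for each $\alpha$, an $\sO_{U_\alpha}$-isomorphism $\Phi_\alpha\colon F_0^*E|_{U_\alpha}\xrightarrow{\ \sim\ }H|_{U_\alpha}$ such that (a) $\Phi_\alpha^{-1}\circ(\nabla|_{U_\alpha})\circ\Phi_\alpha=\nabla_{can}+\zeta_\alpha(F_0^*\theta)$, and (b) $\Phi_\beta^{-1}\circ\Phi_\alpha=G_{\alpha\beta}$ on $U_{\alpha\beta}$; the $\Phi_\alpha$ then glue to the global isomorphism $\tilde{\Phi}$, which is horizontal by (a) and is an isomorphism of $\sO_X$-modules because each $\Phi_\alpha$ is, by the strong divisibility axiom for $\Phi$.

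First I would identify $\Phi_\alpha$ with Faltings' divided Frobenius: after lifting the Hodge filtration to the $W_2$-lift $\tilde U_\alpha$ and splitting it locally, the rule ``$p^{-i}\tilde F_\alpha^*$ on $\mathrm{Fil}^i$'' defines an $\sO_{U_\alpha}$-linear map out of the reduction of the filtered Frobenius pullback, whose associated graded identifies the source with $F_0^*\,\mathrm{Gr}_{\mathrm{Fil}}(H)=F_0^*E|_{U_\alpha}$, and which is an isomorphism onto $H|_{U_\alpha}$ exactly by strong divisibility. For (a) one evaluates $\nabla\circ\Phi_\alpha$ on the image of a local section $e$ of $\mathrm{Fil}^i$ with class $\bar e\in E^i$: by Griffiths transversality $\nabla e$ lies in $\mathrm{Fil}^{i-1}\otimes\Omega$ and its image in $E^{i-1}\otimes\Omega$ is $\theta(\bar e)$; since $d\tilde F_\alpha=[p]\,\zeta_\alpha$ by the normalization of \S2.1, differentiating the pullback trades the divided-power level $p^{-i}$ for $p^{-(i-1)}$ against the factor $[p]$ supplied by $d\tilde F_\alpha$, so that the $\mathrm{Fil}^{i-1}/\mathrm{Fil}^i$-component of $\nabla e$ contributes precisely $\zeta_\alpha(F_0^*\theta)$ while the $\mathrm{Fil}^i$-component contributes $\nabla_{can}$; keeping track of the $\sigma$-semilinearity gives (a) with the sign convention that produces the $-\theta$ of \S3. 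For (b) the key point is that $\Phi$ is a \emph{global} object, independent of the auxiliary choices $\tilde F_\alpha$; hence $\Phi_\beta^{-1}\circ\Phi_\alpha$ is exactly the automorphism of $F_0^*E|_{U_{\alpha\beta}}$ comparing the two divided-Frobenius trivializations built from $\tilde F_\alpha$ and $\tilde F_\beta$, and by the computation underlying Lemma \ref{lemma D-I} --- the two liftings differ, to order $p$, by the derivation $h_{\alpha\beta}$, which enters the filtered pullback through its divided powers, i.e. through $\exp$ --- this automorphism equals $\exp[h_{\alpha\beta}(F_0^*\theta)]=G_{\alpha\beta}$.

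The main obstacle is carrying out (a) and (b) rigorously in the strict $p$-torsion setting, where the literal $p^{-i}$ is only meaningful through the $W_2$-lift: one must check that the shift of divided-power level interacts correctly with $d\tilde F_\alpha=[p]\zeta_\alpha$ and with Griffiths transversality so as to reproduce, on the nose and with the correct sign, both the operator $\zeta_\alpha(F_0^*\theta)$ and the gluing cocycle $\exp[h_{\alpha\beta}(F_0^*\theta)]$ of \S2--\S3. Equivalently, this amounts to the assertion that Faltings' Frobenius descent along the Hodge filtration agrees with the Ogus-Vologodsky inverse Cartier transform \cite{OV} on the nilpotent objects at hand; once this local dictionary is established, the globalization is formal and yields the horizontal $\sO_X$-linear isomorphism $\tilde{\Phi}\colon C^{-1}(E,-\theta)\cong(H,\nabla)$.
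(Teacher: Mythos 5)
Your proposal follows essentially the same route as the paper: local divided-Frobenius isomorphisms $\tilde{\Phi}_{\tilde F_\alpha}=\sum_i\Phi/p^i\colon F_0^*E|_{U_\alpha}\to H|_{U_\alpha}$ furnished by strong divisibility, with your step (a) being the paper's connection comparison via horizontality of $\Phi$ and Griffiths transversality, and your step (b) being the paper's cocycle check, which it makes precise by applying the Taylor formula to the horizontal $\Phi$ and matching the resulting series $1+\sum_{\underline{j}}F_0^*(\theta_\partial^{\underline{j}})z^{\underline{j}}/\underline{j}!$ with $J_{\alpha\beta}=\exp[h_{\alpha\beta}(F_0^*\theta)]$ in local coordinates. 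The only part you leave compressed is exactly that Taylor-formula computation, which is where one sees that only the graded (Higgs) part of $\nabla$ survives the division by powers of $p$.
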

\begin{remark}
In our recent work \cite{LSZ2013}, the inverse Cartier transform has been lifted over an arbitrary truncated Witt ring $W_n$ and the analogous statement holds over $W_n$ for $n\in \N$.
\end{remark}
\begin{proof}
We use the reinterpretation of $C^{-1}$ via the exponential twisting and write $(H_{\exp},\nabla_{\exp})$ for $C_{\exp}^{-1}(E,\theta)$. Take a small open affine covering $\{\mathbf{U}_{\alpha}\}$ of $\bf{X}$, together with a Frobenius lifting $F_{\bf{U}_\alpha}$ over $\hat{\bf{U}}_{\alpha}$, the $p$-adic completion of $\bf{U}_{\alpha}$. As before, we denote by $\tilde{U}_{\alpha}$ and $\tilde{F}_{\alpha}$ their mod $p^2$ reductions. Recall that over the pair ($\bf{U}_{\alpha}$, $F_{\bf{U}_{\alpha}}$),  the strong $p$-divisible property of $\Phi$ provides an local isomorphism:
$$
 \tilde{\Phi}_{\tilde{F}_\alpha}:=\sum_{i}\Phi/p^i: H_{\exp}|_{U_\alpha}=F_0^*(E|_{U_\alpha})\cong H|_{U_\alpha}.
$$
Then we shall show that the set of local isomorphisms $\{ \tilde{\Phi}_{\tilde{F}_\alpha}\}$ glues into an isomorphism $\tilde{\Phi}$ as claimed.\\

{\itshape Step 1: Sheaf isomorphism.}
 For any local section $e$ of $E$ over $U_{\alpha\beta}$, we are going to show that over $ U_{\alpha\beta}$,
$$\tilde{\Phi}_{\tilde{F}_{\alpha}}(F_0^*e)=\tilde{\Phi}_{\tilde{F}_{\beta}}\circ J_{\alpha\beta}(F_0^*e).$$
We take a system of \'{e}tale local coordinates $\{t_1,\cdots, t_d\}$ of $U_{\alpha\beta}$. For a multi-index $\underline{j}=(j_1,\cdots,j_d)$, we put  $$\theta_{\partial}^{\underline{j}}=(\partial_{t_1}\lrcorner \theta)^{j_1}\cdots (\partial_{t_d}\lrcorner \theta)^{j_d},\ z_l=\left(\frac{ \tilde{F}_{\alpha}-\tilde{F}_{\beta}}{[p]} \right)(F_0^*t_l), \ z^{\underline{j}}=\prod_{l=1}^d z_l^{j_l}.$$
As $\Phi$ is horizontal under $\nabla$, according to  the Taylor formula,  we have
$$
\tilde{\Phi}_{\tilde{F}_{\alpha}}(F_0^*e )= \tilde{\Phi}_{\tilde{F}_{\beta}} \circ \left(1+\sum_{|\underline{j}|=1}^{n}F_0^*(\theta_{\partial}^{\underline{j}})\cdot\frac{z^{\underline{j}}}{\underline{j}\,!}\right)(F_0^*e).
$$
So it suffices to show $J_{\alpha\beta}=1+\sum_{|\underline{j}|=1}^{n}F_0^*(\theta_{\partial}^{\underline{j}})\cdot\frac{z^{\underline{j}}}{ \underline{j}\,!}$. As $$
h_{\alpha\beta}(F_0^*\theta)=\sum_{l=1}^d F_0^*(\partial_{t_l}\lrcorner \theta) h_{\alpha\beta}(F_0^*dt_l),$$
and $$h_{\alpha\beta}(F_0^*dt_l)=\left(\frac{ \tilde{F}_{\alpha}-\tilde{F}_{\beta}}{[p]} \right)(F_0^*t_l)=z_l,$$
it follows that
$$
\frac{(h_{\alpha\beta}(F_0^*\theta ))^i}{i!}=\frac{(\sum_{l=1}^d F_0^*(\partial_{t_l}\lrcorner \theta)z_l)^i}{i!}=\sum_{|\underline{j}|=i}\frac{F_0^*(\theta_{\partial}^{\underline{j}})z^{\underline{j}} }{\underline{j}\, !}.
$$
Recall that $J_{\alpha\beta}=\sum_{i=0}^{n}\frac{(h_{\alpha\beta}(F_0^*\theta ))^i}{i!}$, it follows that
$$J_{\alpha\beta}=1+\sum_{|\underline{j}|=1}^{n}F_0^*(\theta_{\partial}^{\underline{j}})\cdot\frac{z^{\underline{j}}}{ \underline{j}\,!}$$
as wanted.\\

{\itshape Step 2: Connection isomorphism.} We need to show that under the above isomorphism, the connection $\nabla_{\exp}$ on $H_{\exp}$ is equal to the connection $\nabla$ on $H$. Take a local section $e$ of $E$ over $U_{\alpha}$. Set $\theta_{l}:=\partial t_l\lrcorner\theta$. By the horizontal property of $\Phi$, we have
$$
\nabla[\tilde{\Phi}_{\tilde{F}_{\alpha}}(F_0^*e)]=\tilde{\Phi}_{\tilde{F}_\alpha}\circ \left[\sum_{l=1}^d F_0^*\theta_l\cdot \zeta_{\alpha}(F_0^*dt_l)\right](F_0^*e).
$$
As
$$\nabla_{exp}(F_0^*e)=\left[\sum_{l=1}^d F_0^*\theta_l\cdot\zeta_{\alpha}(F_0^*dt_l)\right](F_0^*e),$$
it follows that
$$\tilde{\Phi}_{\tilde{F}_\alpha}(\nabla_{exp}(F_0^*e))=\left[\sum_{l=1}^d F_0^* \theta_l\cdot\zeta_{\alpha}(F_0^*dt_l)\right]\tilde{\Phi}_{\tilde{F}_\alpha}(F_0^*e) =\nabla[\tilde{\Phi}_{\tilde{F}_\alpha}(F_0^*e)].
$$
\end{proof}

{\bf Acknowledgement:} We would like to thank heartily Arthur Ogus for his very helpful comments on the old version \cite{LSZ} of this paper, especially for pointing to us that the equivalence between $C^{-1}$ and $C^{-1}_{\exp}$ up to sign should follow from Remark 2.10 \cite{OV}.

\end{document}